\newtheorem{theorem}{Theorem}\numberwithin{theorem}{section}
\newtheorem{corollary}[theorem]{Corollary}
\newtheorem{lemma}[theorem]{Lemma}
\newtheorem{definition}[theorem]{Definition}
\newtheorem{problem}[theorem]{Problem}
\newtheorem{remark}[theorem]{Remark}
\def\ve#1{\mathchoice{\mbox{\boldmath$\displaystyle\bf#1$}}
{\mbox{\boldmath$\textstyle\bf#1$}}
{\mbox{\boldmath$\scriptstyle\bf#1$}}
{\mbox{\boldmath$\scriptscriptstyle\bf#1$}}}
\newcommand{\Z}{{\mathbb Z}}
\newcommand{\R}{\mathbb R}
\newcommand{\F}{\mathcal F}
\renewcommand{\H} {{\mathcal H}}
\newcommand{\Qsat}{\ensuremath{Q_{\operatorname{sat}}}}
\def\a{{\mathbf a}}
\def\b{{\mathbf b}}
\def\f{{\mathbf f}}
\def\h{{\mathbf h}}
\def\x{{\mathbf x}}
\def\v{{\mathbf v}}
\def\z{{\mathbf z}}
\newcommand{\rem}[1]{} 
\def\ve#1{\mathchoice{\mbox{\boldmath$\displaystyle\bf#1$}}
{\mbox{\boldmath$\textstyle\bf#1$}}
{\mbox{\boldmath$\scriptstyle\bf#1$}}
{\mbox{\boldmath$\scriptscriptstyle\bf#1$}}}
\definecolor{orange}{RGB}{255,127,0}
\title[Semigroups]{Semigroups -- A Computational Approach}
\author[F.~Kohl, Y.~Li,  J.~Rauh, and R.~Yoshida]{Florian Kohl \and Yanxi Li \and  Johannes Rauh \and Ruriko Yoshida}
\address{FB Mathematik and Informatik
Institut fuer Mathematik, K\"onigin-Luise Str. 24-26,14195 Berlin, Germany;\\Department of Mathematics and Statistics,
York University, 4700 Keele St.,
Toronto, ON, M3J 1P3;\\
Statistics Department,
University of Kentucky,
725 Rose Street,
Lexington KY 40536-0082;\\
Department of Operations Research,
1411 Cunningham Road,
Naval Postgraduate School,
Monterey, CA 93943-5219.\\
}
\email{fkohl@zedat.fu-berlin.de,\\jarauh@yorku.ca,\\yli282@g.uky.edu, \\ryoshida@np.sedu}
\subjclass[2010]{11P21, 52B11, 90C08, 97K70}
\keywords{Feasibility Problem, lattice points, polytopes, software}
\begin{document}

\begin{abstract}
The question whether there exists an integral solution to the system of linear equations with non-negativity constraints, $A\x = \b, \, \x \ge 0$, where $A \in \Z^{m\times n}$ and ${\mathbf b} \in \Z^m$, finds its applications in many areas such as operations research, number theory, combinatorics, and statistics. In order to solve this problem, we have to understand the semigroup generated by the columns of the matrix $A$ and the structure of the ``holes'' which
are the difference between the semigroup and its saturation. In this paper, we discuss the implementation of an algorithm by Hemmecke, Takemura, and Yoshida that computes the set of holes of a semigroup and we discuss applications to problems in combinatorics.  Moreover, we compute the set of holes for the common diagonal effect model and we show that the $n^\text{th}$ linear ordering polytope has the integer-decomposition property for $n\leq 7$. The software is available at
 \url{http://ehrhart.math.fu-berlin.de/People/fkohl/HASE/}.
\end{abstract}

\maketitle

\section{Introduction}
\label{sec:intro}\label{subsec:semigroup}

Consider the system of linear equations and inequalities
\begin{equation}\label{eq:feas}
A {\mathbf x} = {\mathbf b}, \, \, \, {\mathbf x} \geq 0, 
\end{equation}
where $A \in \Z^{m\times n}$ and ${\mathbf b} \in \Z^m$.  
 Suppose that the solution set over the real numbers $\{\mathbf x \in \R^n: A {\mathbf x} = {\mathbf b}, \, {\mathbf x} \geq 0\}$ is not empty.
\begin{problem}\label{prob:feas}
Decide whether there exists an integral solution to the system~\eqref{eq:feas} or not.
\end{problem}
Problem~\ref{prob:feas} is called the \emph{integer feasibility problem}.  To decide whether a system of equations is feasible is the first step in integer programming, where the goal is to find an ``optimal'' solution.  Therefore, problem~\ref{prob:feas} can be solved computationally using a linear programming system that can handle integer constraints, such as \textbf{lp\_solve}~\cite{lpsolve}.  However, this computational approach does not work if one wants to study a family of integer feasibility problems at the same time.  Here, we consider the following problem:
\begin{problem}\label{prob:feas2}
For fixed $A\in\Z^{m\times n}$, decide for which $\b\in\Z^{m}$ there exists an integral solution to the system~\eqref{eq:feas}.
\end{problem}

Problems~\ref{prob:feas} and~\ref{prob:feas2}
are of fundamental importance in many areas such as operations research, number theory, combinatorics, and statistics (see \cite{takemura2008} and references within). 
For instance, the \emph{Frobenius problem} is a simple-sounding yet wide-open integer feasibility problem, see e.g. \cite{Ramirez} for an overview. For coprime integers $a_1$, $a_2$,$\dots$, $a_n >0$, the Frobenius problem asks to find the biggest positive integer that cannot be expressed as a non-negative linear combination of the $a_i$'s with integral coefficients. Even for $n=4$, this is an active area of research. 

Feasibility can be described in terms of the \emph{semigroup}
\begin{equation}
\label{eq:semigroupQ}
Q = Q(A)= \left\{\ve a_1x_1 + \cdots + \ve a_n x_n \mid x_1, \dots, x_n \in   \Z_{\geq 0} \right\}
\end{equation}
generated by the column vectors $\ve a_1,\ldots,\ve a_n$ of~$A$.  Here, $\Z_{\geq 0}$ denotes the set of non-negative integers,
i.e., $\Z_{\geq 0} := \{0,1,2,\dots \}$.  Moreover, we need the \emph{cone}
\begin{equation*}
K = K(A) = \left\{\ve a_1x_1 + \cdots + \ve a_n x_n \mid x_1, \dots, x_n \in   \R_{\geq 0} \right\}
\end{equation*}
generated by the columns of~$A$, where $\R_{\geq 0} := [0,\infty)$.  Throughout this paper, we assume that all cones are
\emph{pointed}, i.e., that they do not contain lines: if $\mathbf{v}\in K\setminus\{0\}$, then $-\mathbf{v}\notin K$.
Finally, we need the \emph{lattice}
\begin{equation*}
  L = L(A) = \left\{\ve a_1x_1 + \cdots + \ve a_n x_n \mid x_1, \dots, x_n \in   \Z \right\}
\end{equation*}
generated by the columns of~$A$.  In this paper, we assume $L(A) =\Z^n$.

By definition, an integral solution to the system~\eqref{eq:feas} exists if and only if ${\mathbf b}\in Q$.
In general, it is difficult to check whether a given vector belongs to~$Q$.  However, it is much easier to check whether a given vector belongs to $L$ or to~$K$: To check whether $\mathbf{b}\in L$ is a problem of linear algebra (over the integers), and to check whether $\mathbf{b}\in K$ one can compute the inequality description of $K$ and check whether $\mathbf{b}$ satisfies all linear inequalities. Admittedly, computing the inequalities can be a difficult problem in itself, but usually it is still easier than the integer feasibility problem. Therefore, it makes sense to compare $Q$ to the larger semigroup $\Qsat = K \cap L$, which is called the
{\em saturation} of $Q$.  Clearly, $Q \subset \Qsat$, and
we call $Q$ {\em saturated} (or \emph{normal}) if $Q = \Qsat$.
We define the set of {\em holes $H$} of the semigroup $Q$ to be $H:=\Qsat \setminus Q$.

If $\b \in H \subset \Qsat$, then the system
\[
A\x = \b, \x \geq 0, 
\]
has a solution $\x\in\R^{n}$ over the \emph{reals}, but no solution $\x\in\Z_{\geq 0}^{n}$ over the integers.

In general, the set $H$ may be infinite, but it is possible to write $H$ as a finite union of finitely generated (affine) monoids. The first step is to compute the fundamental holes, where we say that a hole $\mathbf h\in H$ is \emph{fundamental} if there is no other hole $\mathbf h'\in H$ such that $\mathbf h-\mathbf h'\in Q$.  In contrast to $H$, the set
$F\subseteq H$ of fundamental holes is always finite, as it is
contained in the bounded set
\[
P : = \left \{\sum\nolimits_{i=1}^{n}\lambda_i \a_i \mid 0\leq \lambda_1,\dots,\lambda_n<1   \right \},
\] as shown in \cite{takemura2008}.
A finite algorithm to compute~$F$ is due to~\cite{raymond09}.

Once $F$ is known, it is necessary to compute an explicit representation of the holes in~$\f + Q$.  Hemmecke et.~al. \cite{raymond09} showed how the set of holes in~$\f + Q$ can be expressed as a finite union of finitely generated monoids using ideas from commutative algebra.  Together with an algorithm to compute the fundamental holes, this gives a \emph{finite} algorithm to compute an \emph{explicit} representation of $H$, even for an infinite set $H$.

As shown by \cite{Barvinok2003,takemura2008}, computing the set of holes is polynomial in time in the input size of $A$ if we fix the number of variables $m$ and~$n$ (see the definition of input size in~\cite{bar}).
Once we compute $Q$ for a particular matrix~$A$, we do not
have to compute it again as it does not depend on~$\b$.

In this paper, we have implemented the algorithm introduced in \cite{raymond09} and we have
applied our software to problems in combinatorics and statistics.  We named the software HASE (\textbf Holes in \textbf Affine \textbf{SE}migroups).
It is available at \url{http://ehrhart.math.fu-berlin.de/People/fkohl/HASE/}.  The homepage also contains the input files that are needed to reproduce the examples that are discussed in this paper.

This paper is organized as follows:  In Section \ref{ALG}, we outline
the algorithm. The performance of the algorithm and possible ways to speed up the process are described in Section \ref{sec:performance}. In Section~\ref{CDEM}, we compute the set of holes for the {\em common diagonal effect model} \cite{takemura2008b}. In Section~\ref{experiment}, we show some
computational experiments concerning the integer-decomposition property of polytopes and concerning a lifting algorithm for Markov bases, see \cite{RauhSullivant15:Lifting_Markov_bases}. We end with a discussion and open problems.

\section{Computing holes}\label{ALG}

In this section, we briefly describe our software and the implementation. 
The two main steps of the algorithm of \cite{raymond09} are:
\begin{enumerate}
\item  Compute the set $F$ of fundamental holes.
\item For each of the finitely many $\f\in F$, compute an
explicit representation of the holes in $\f+Q$.
\end{enumerate}
Our software outsources step $1$ to \textbf{Normaliz}, see \cite{Normaliz}.
\textbf{Normaliz} is a computer program that computes the saturation (or \textbf{Normaliz}ation) of an affine semigroup.
Usually, the saturation is output in the form of a matrix~$A'$ such that the saturation $\Qsat(A) = Q(A')$ equals the semigroup generated by the columns of~$A'$.
Starting with version~3.0, \textbf{Normaliz} can also compute a second representation of~$\Qsat(A)$ by giving a minimal set $F'$ of \emph{``generators of $\Qsat(A)$ as a $Q(A)$-module.''}  Formally, this says that
\begin{equation*}
  \Qsat(A) = \bigcup_{\f\in F'}(\f + Q(A)).
\end{equation*}
It is not difficult to see that $\textbf{0}\in F'$ (since $Q\subseteq \Qsat$) and that $F:=F'\setminus\{\textbf{0}\}$ is the set of fundamental holes of~$Q(A)$.  For details how \textbf{Normaliz} computes the set~$F'$, we refer to the documentation of~\textbf{Normaliz}. As an illustration, Section~\ref{CDEM} contains a description of the (fundamental and non-fundamental) holes of the common diagonal effect models.

It remains to determine the holes in $\f +Q$ for every fundamental hole $\f \in F$. Every \textbf{non}-hole belongs to $(\f + Q) \cap Q$ and if $\z \in (\f + Q) \cap Q$, then also $\z + A\boldsymbol{\lambda} \in (\f + Q) \cap Q$ for any $\boldsymbol{\lambda} \in \Z^n_+$. Consider the ideal
\begin{equation}
\label{eq:nonholeideal}
I_{A,\f} := \left \langle \x^{\boldsymbol{\lambda}} \mid \boldsymbol{\lambda} \in \Z^n_{\geq 0},\,  \f+ A\boldsymbol{\lambda} \in (\f + Q) \cap Q\right\rangle,
\end{equation} 
where $\x^{\boldsymbol{\lambda}}:=\prod_{i=1}^{n}x_{i}^{\boldsymbol{\lambda_{i}}}$ is the monomial with exponent vector~$\boldsymbol\lambda$.
Then, $\f  + A\boldsymbol{\lambda}$ is not a hole if and only if  $\x^{\boldsymbol{\lambda}} \in I_{A,\f}$. So we need to find a description of the monomials not in~$I_{A,\f}$.  These monomials are called the \emph{standard monomials}. There are algorithms for finding the standard monomials, once a generating set for the ideal $I_{A,\f}$ is known. A generating set of the ideal $I_{A,\f}$ is described by the following lemma:
\begin{lemma}[\cite{raymond09}, Lemma 4.1]
Let $M$ be the set of $\leq$-minimal solutions $(\boldsymbol{\lambda}, \boldsymbol{\mu})\in \Z_{\geq 0}^{2n}$ to $\f + A\boldsymbol{\lambda} = A \boldsymbol{\mu}$, where the partial order $\leq$  is given by coordinatewise comparison. Then
\[
I_{A,\f} = \left \langle \x^{\boldsymbol{\lambda}} \mid  \exists \boldsymbol{\mu} \in \Z_{\geq 0}^n \text{ such that }(\boldsymbol{\lambda}, \boldsymbol{\mu}) \in M\right \rangle	.
\]
\end{lemma}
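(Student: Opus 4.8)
The plan is to prove the two ideal inclusions separately, after first simplifying the membership condition defining the generators of $I_{A,\f}$. First I would observe that $\f + A\boldsymbol{\lambda}$ always lies in $\f + Q$, since $A\boldsymbol{\lambda}\in Q$; hence the condition $\f + A\boldsymbol{\lambda}\in(\f+Q)\cap Q$ appearing in~\eqref{eq:nonholeideal} reduces to $\f + A\boldsymbol{\lambda}\in Q$, i.e. to the existence of some $\boldsymbol{\mu}\in\Z_{\geq 0}^n$ with $\f + A\boldsymbol{\lambda} = A\boldsymbol{\mu}$. Writing $\tilde M:=\{(\boldsymbol{\lambda},\boldsymbol{\mu})\in\Z_{\geq 0}^{2n}\mid \f + A\boldsymbol{\lambda}=A\boldsymbol{\mu}\}$ for the full solution set and $S:=\{\boldsymbol{\lambda}\mid (\boldsymbol{\lambda},\boldsymbol{\mu})\in\tilde M\text{ for some }\boldsymbol{\mu}\}$ for its projection onto the first block of coordinates, this identification shows $I_{A,\f}=\langle \x^{\boldsymbol{\lambda}}\mid \boldsymbol{\lambda}\in S\rangle$. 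Since $M$ consists of the $\leq$-minimal elements of $\tilde M$, its projection is contained in $S$, so every generator on the right-hand side of the lemma is already a generator of $I_{A,\f}$; this gives the inclusion $\supseteq$ immediately.

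For the reverse inclusion I would show that each generator $\x^{\boldsymbol{\lambda}}$ with $\boldsymbol{\lambda}\in S$ is divisible by $\x^{\boldsymbol{\lambda}'}$ for some $(\boldsymbol{\lambda}',\boldsymbol{\mu}')\in M$. Fix $\boldsymbol{\lambda}\in S$ and choose $\boldsymbol{\mu}$ with $(\boldsymbol{\lambda},\boldsymbol{\mu})\in\tilde M$. The set of elements of $\tilde M$ lying coordinatewise below $(\boldsymbol{\lambda},\boldsymbol{\mu})$ is finite and nonempty, so it contains a $\leq$-minimal element $(\boldsymbol{\lambda}',\boldsymbol{\mu}')$; by transitivity this element is in fact minimal in all of $\tilde M$, hence $(\boldsymbol{\lambda}',\boldsymbol{\mu}')\in M$. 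Because the minimality is taken in the joint order on $\Z_{\geq 0}^{2n}$, domination $(\boldsymbol{\lambda}',\boldsymbol{\mu}')\leq(\boldsymbol{\lambda},\boldsymbol{\mu})$ forces in particular $\boldsymbol{\lambda}'\leq\boldsymbol{\lambda}$ coordinatewise, so $\x^{\boldsymbol{\lambda}}=\x^{\boldsymbol{\lambda}-\boldsymbol{\lambda}'}\,\x^{\boldsymbol{\lambda}'}$ lies in the ideal generated by the $\x^{\boldsymbol{\lambda}'}$ with $(\boldsymbol{\lambda}',\boldsymbol{\mu}')\in M$. Combining the two inclusions yields the claimed equality.

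The conceptual content here is light, and the only point requiring care is the passage from the joint minimality of $(\boldsymbol{\lambda}',\boldsymbol{\mu}')$ to the divisibility statement about the $\boldsymbol{\lambda}$-components alone. I expect the main (small) obstacle to be convincing oneself that minimizing in the enlarged space $\Z_{\geq 0}^{2n}$, rather than just over the $\boldsymbol{\lambda}$'s in $S$, is the correct bookkeeping: one must check that a minimal solution below a given one still has $\boldsymbol{\lambda}'\leq\boldsymbol{\lambda}$, rather than merely trading a smaller $\boldsymbol{\lambda}$ for a larger $\boldsymbol{\mu}$, and the argument above settles this since coordinatewise domination in the joint order forces domination in each block. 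The well-foundedness of the coordinatewise order on $\Z_{\geq 0}^{2n}$ (equivalently, Dickson's lemma) is what guarantees both that such minimal elements exist below any given point and that $M$ itself is finite, so that the resulting generating set is finite and $I_{A,\f}$ is, as expected, a monomial ideal amenable to a standard-monomial description.
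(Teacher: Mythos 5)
Your proof is correct. Note, however, that the paper itself offers no proof of this lemma: it is quoted verbatim from Hemmecke, Takemura, and Yoshida (Lemma 4.1 of the cited reference), so there is nothing in this paper to compare against. Your argument — reducing the membership condition to $\f+A\boldsymbol{\lambda}\in Q$, getting $\supseteq$ for free, and getting $\subseteq$ by passing from an arbitrary solution $(\boldsymbol{\lambda},\boldsymbol{\mu})$ to a $\leq$-minimal one below it and using that minimality in the joint order on $\Z_{\geq 0}^{2n}$ implies $\boldsymbol{\lambda}'\leq\boldsymbol{\lambda}$, hence divisibility $\x^{\boldsymbol{\lambda}'}\mid\x^{\boldsymbol{\lambda}}$ — is the standard and complete argument, and your attention to the finiteness/well-foundedness point (Dickson's lemma) is exactly the care the statement requires.
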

Therefore, we have to find minimal integral solutions to the above system of linear equations for every fundamental hole $\mathbf f$. For this task, we can again use \textbf{Normaliz}, or we can use the \textbf{zsolve} command of \textbf{4ti2}, see \cite{4ti2}.  Usually, \textbf{zsolve} runs faster, so it is the default choice of HASE.

Once we have a generating set for~$I_{A,\f}$, we can use a computer algebra software to find the standard monomials.  In
general, the set of standard monomials of a polynomial ideal can be infinite, but it has a finite representation in
terms of \emph{standard pairs}.  HASE relies on \textbf{Macaulay2}~\cite{M2}, which has the command \texttt{standardPairs}.
A standard pair is a pair that consists of a monomial $\x^{\lambda}$ and a set $\x^{\boldsymbol\mu_{1}},\dots,\x^{\boldsymbol\mu_{r}}$ of monomials.  Such a pair corresponds to the set of holes
\begin{equation*}
  \textstyle \f + A (\boldsymbol\lambda + \sum_{i=1}^{r} c_{i}\boldsymbol\mu_{i}),
  \quad
  c_{i}\in\Z_{\geq 0},
\end{equation*}
and the set of all such standard pairs gives all holes in~$\f + Q$.

\section{Performance of the algorithm}
\label{sec:performance}

As shown by \cite{Barvinok2003,takemura2008}, computing the set of holes $H$ for the semigroup $Q$ is polynomial in
time in the input size of $A$ if we fix the number of variables $m$ and~$n$ (see the definition of the input size in~\cite{bar}).  Still, computing $H$ is a difficult problem, and our algorithm may fail to terminate due to limited memory or time even for reasonably-sized examples.

In the examples we computed, we experienced the following problems:
\begin{itemize}
\item \textbf{Normaliz} may fail to compute the set $F$ of fundamental holes.
\item For one of the fundamental holes~$\f\in F$, \textbf{zsolve} or \textbf{Normaliz} may fail to find the $\le$-minimal solutions to
  $\f+A\boldsymbol\lambda=A\boldsymbol\mu$.
\item For one of the fundamental holes~$\f\in F$, \textbf{Macaulay2} may fail to compute the standard pairs.
\end{itemize}
In this list, a failure means that either we ran out of memory or we ran out of time.

If \textbf{Normaliz} fails, there is not much we can do.  We really need the fundamental holes, and if computing the fundamental holes overstrains our computational resources at hand, it is very probable that the problem is just too difficult.  The only thing we could do is to ask the developpers of \textbf{Normaliz}, who are always up for a challenge, for advice.

If one of the later steps fails, there is much more that we could do.  The translation of computing the holes of the
form $\f + Q$ for a fundamental hole~$\f$ into a problem of commutative algebra is not very direct, and there may be some room for improvements.  We discuss one trick that we implemented in Section~\ref{sec:trick} below.

There may be fourth problem: Namely, the set $F$ may be extremely large.  Thus, even if \textbf{Normaliz} computes $F$
within reasonable time and if \textbf{zsolve} and \textbf{Macaulay2} find the hole monoids reasonably fast for each
single hole, the total running time may be unacceptable.  However, at least in this case it is relatively easy to obtain
a good estimate for the total running time that would be needed, since in this case the cardinality of~$F$ is known, and
the running times of \textbf{zsolve} and \textbf{Macaulay2} per fundamental hole can be estimated by their performance
on the first few holes.

If $F$ is very large, a natural remedy  is to look for symmetries of the problem.  However, currently symmetries are not implemented in HASE.

\subsection{Speeding up \textbf{zsolve}}
\label{sec:trick}

Let $\f\in F$ be a fundamental hole.  As explained in Section~\ref{ALG}, we want to solve the linear system $\f+A\boldsymbol{\lambda}=A\boldsymbol{\mu}$.
This system can be simplified considerably if certain non-holes are known in advance.  The simplest case is to look at
the vectors $\f+\a_{i}$, where $\a_{i}$ is a column of~$A$.

Suppose that $\f+\a_{i}$ is not a hole.  Then $\f+\a_{i}=A\boldsymbol{\mu_{0}}$ for some~$\boldsymbol{\mu_0}$.  Thus, $\f+\a_{i}+A\boldsymbol{\lambda} =
A(\boldsymbol{\mu_{0}}+\boldsymbol{\lambda})$.  This shows that if $\f+\a_{i}$ is not a hole, then $\f+\a_{i}+Q$ contains no other holes. This implies that every minimal solution to $\f+A\boldsymbol{\lambda}=A\boldsymbol{\mu}$ has $\lambda_{i}=0$. Let $A'$ be the matrix $A$ with the $i^\text{th}$ column $\a_{i}$ dropped. Then, instead of solving $\f+A\boldsymbol{\lambda}=A\boldsymbol{\mu}$, we may just as well solve $\f+A'\boldsymbol{\lambda'}=A\boldsymbol{\mu}$. Observe that this leads to a linear system with one variable fewer. If we can identify many columns $\a_{i}$ that we can drop, we can speed
up the computation of the holes in $\f + Q$.

This idea is implemented in HASE and can be activated using the option \texttt{--trick}.  With this option, HASE does
the following instead of solving $\f+A\boldsymbol{\lambda}=A\boldsymbol{\mu}$:
\begin{enumerate}
\item For each column~$\a_{i}$ of $A$, check whether $\f+\a_{i}$ is a hole.
\item Let $A'$ be the matrix with columns those $\a_{i}$ for which $\f+\a_{i}$ is a hole.
\item Compute the minimal solutions to $\f+A'\boldsymbol{\lambda'}=A\boldsymbol{\mu}$ (using either \textbf{zsolve} or \textbf{Normaliz}).
\item Use \textbf{Macaulay2} to compute the standard pairs of the ideal $I_{A',\mathbf f}$.
\end{enumerate}

Step~1 is an integer feasibility problem.  HASE uses the open source (mixed-integer) linear programming system
\textbf{lp\_solve}~\cite{lpsolve} to solve this problem.  Usually, this is a relatively quick step (and if it is not, it is again an
indication that our original problem is too difficult).

In the last step, observe that the trick also leads to a smaller ideal $I_{A',\f}$ or, more precisely, an ideal in a
smaller ambient ring ($I_{A,\f}$ and $I_{A',\f}$ will in fact have the same generators).  This, however, should not lead
to a big speed-up, since the command \verb|standardPairs| in \textbf{Macaulay2} will usually realize when variables do not
appear in the generating set of an ideal.

\section{Common diagonal effect models}\label{CDEM}

In this section, we consider the common diagonal effect models (CDEM) 
introduced by \cite{Hara09a}. 
The results were obtained by computing small examples using HASE to build a conjecture.

Let $A\in \Z^{(2d+1)\times d^2}$ be the matrix that computes the row sums, column sums, and also the diagonal sum of a $d \times   d$ table. For instance, if $ d =3 $, we have
\begin{equation*}
A = \begin{pmatrix}
1 & 0 & 0 & 1 & 0 & 0 & 1 & 0 & 0\\
0 & 1 & 0 & 0 & 1 & 0 & 0 & 1 & 0\\
0 & 0 & 1 & 0 & 0 & 1 & 0 & 0 & 1\\
1 & 1 & 1 & 0 & 0 & 0 & 0 & 0 & 0\\
0 & 0 & 0 & 1 & 1 & 1 & 0 & 0 & 0\\
0 & 0 & 0 & 0 & 0 & 0 & 1 & 1 & 1\\
1 & 0 & 0 & 0 & 1 & 0 & 0 & 0 & 1\\
\end{pmatrix}.
\end{equation*}
The cone $K$ generated by the columns of $A$ lies in the hyperplane
\begin{equation}
  \label{eq:sum}
  \sum_{i=1}^{d} z_i = \sum_{i=d+1}^{2d} z_i,
\end{equation}
since this linear equality is satisfied by all columns of~$A$.
Our goal is to describe the Hilbert basis of the saturation $\Qsat$ of the semigroup~$Q$ generated by the columns of $A$. First, we define a set $\F$ that will later turn out to be the set of fundamental holes of $Q$.
\begin{definition}
  Let $\a_{ij}$ be the $((i-1)d + j)^{\text{th}}$ column of~$A$, and let
\begin{equation*}
  \h_{kl} := \frac {1}{2} \left( \a_{ll} + \a_{lk} + \a_{kl} + \a_{kk} \right).
\end{equation*} 
Finally, let $\F := \left \{ \h_{kl} : k,l\in[d], k<l \right\}$,
where $[d] := \{1 , 2 , \dots,d\}$.
\end{definition}
There are $\binom{d}{2}$ choices for $l$ and $k$. Since every choice yields a different vector, we get $\left|\F \right| = \binom{d}{2}$. The next lemma shows that $\F$ consists of holes.
\begin{lemma}
$\F \subset K \cap \Z^{2d+1}$, and $\F\subseteq \Qsat\setminus Q$. 
\end{lemma}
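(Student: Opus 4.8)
The plan is to first replace the definition of $\h_{kl}$ by an explicit closed form and then verify the three membership statements separately. Writing $\e_{1},\dots,\e_{2d+1}$ for the standard basis of $\Z^{2d+1}$, the column of $A$ indexed by the cell $(i,j)$ is
\begin{equation*}
  \a_{ij} = \e_{j} + \e_{d+i} + \delta_{ij}\,\e_{2d+1},
\end{equation*}
since cell $(i,j)$ contributes to the $j$-th column sum, to the $i$-th row sum, and to the diagonal sum exactly when $i=j$. Substituting this into the definition and using that the diagonal coordinate $\e_{2d+1}$ occurs with total coefficient $2$ (once in $\a_{kk}$ and once in $\a_{ll}$, while the off-diagonal $\a_{kl},\a_{lk}$ contribute none), a direct computation gives
\begin{equation*}
  \h_{kl} = \e_{k} + \e_{l} + \e_{d+k} + \e_{d+l} + \e_{2d+1}.
\end{equation*}
In particular every $\h_{kl}$ is integral, and since $\h_{kl}=\tfrac12(\a_{kk}+\a_{ll}+\a_{kl}+\a_{lk})$ is a non-negative rational combination of columns of $A$, it lies in $K$. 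This already yields the first assertion $\F\subset K\cap\Z^{2d+1}$.

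For membership in the saturation I would use the witness $2\h_{kl}=\a_{kk}+\a_{ll}+\a_{kl}+\a_{lk}\in Q$. Thus $\h_{kl}$ is an integral vector in $K$ that has a positive integer multiple in $Q$, so $\h_{kl}\in\Qsat$. (This is the one place where the precise lattice defining $\Qsat$ matters: the multiple $2\h_{kl}\in Q\subseteq L$ certifies that $\h_{kl}$ lies in the relevant saturated lattice, even though $\h_{kl}$ itself need not be an integer combination of the columns.)

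The crux is to show $\h_{kl}\notin Q$, and here I would argue by contradiction. Suppose $\h_{kl}=\sum_{i,j}x_{ij}\a_{ij}$ with all $x_{ij}\in\Z_{\ge 0}$ and read off coordinates from the explicit form. The column-sum coordinates force $\sum_{i}x_{ij}=1$ for $j\in\{k,l\}$ and $0$ otherwise, and the row-sum coordinates force $\sum_{j}x_{ij}=1$ for $i\in\{k,l\}$ and $0$ otherwise; by non-negativity the matrix $X=(x_{ij})$ is then supported on the $2\times 2$ block indexed by $\{k,l\}\times\{k,l\}$ with all row and column sums equal to $1$. The only such non-negative integer matrices are the two $2\times 2$ permutation matrices, giving either $x_{kk}=x_{ll}=1$ or $x_{kl}=x_{lk}=1$. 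Finally the diagonal-sum coordinate requires $\sum_{i}x_{ii}=1$, whereas the first option gives $\sum_{i}x_{ii}=2$ and the second gives $\sum_{i}x_{ii}=0$. Neither equals $1$, which is the contradiction. The main obstacle is exactly this last counting step: feasibility over $K$ is immediate from the half-integer representation, and the genuine content is that the forced $2\times 2$ transportation structure on $\{k,l\}$ is incompatible with the required diagonal sum $1$.
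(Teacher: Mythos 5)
Your proof is correct and follows essentially the same route as the paper's: compute the coordinates of $2\h_{kl}$ explicitly to see that $\h_{kl}$ is an integral point of $K$ (hence in $\Qsat=K\cap\Z^{2d+1}$), then use the row-sum, column-sum, and diagonal-sum coordinates to rule out a representation in $Q$. Your packaging of the last step as ``the support is forced onto the $2\times2$ block on $\{k,l\}$ with row and column sums $1$, so $X$ is a permutation matrix with diagonal sum $0$ or $2$, never $1$'' is a slightly cleaner rendering of the paper's case analysis ($i=i'$ versus $i\neq i'$), but it is the same argument.
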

\begin{proof}

$\a_{ij}$ has a $1$ in the $j^{\text{th}}$ coordinate and in the $(d +i)^{\text{th}}$ coordinate. Moreover, if $i=j$, then there is a $1$ in the $(2d+1)^{\text{th}}$ coordinate. So every vector of the form 
$2\h_{kl} = \a_{ll} + \a_{lk} + \a_{kl} + \a_{kk}$
has a $2$ in the $l^{\text{th}}$, $k^{\text{th}}$, $(d + l )^{\text{th}}$, $(d + k)^{\text{th}}$ and in the $(2d +1)^{\text{st}}$ coordinate, and all other coordinates are~0. Thus, $\F \subset K \cap \Z^{2d+1} = \Qsat$.

It remains to show that $\F \cap Q = \emptyset$.  So suppose we have an non-negative integral linear combination of the $\a_{ij}$'s that lies in~$\F$.  To get a $1$ in the $l^{\text{th}}$ coordinate, we need a generator $\a_{il}$ and to get a $1$ in the $k^{\text{th}}$ coordinate we need a generator~$\a_{i' k }$. Since there has to be a $1$ in the $(d+l)^{\text{th}}$ and $(d+k)^{\text{th}}$ coordinate, we see that $i$, $i' \in \{l,k \}$. Note that we cannot use a different generator to obtain a $1$ in these coordinates, since otherwise we would get another $1$ in the first $d$ coordinates. If there are more than $2$ generators, then either there is an entry bigger than $1$ or there are at least five $1'$s in the first $2d$ coordinates. 

If $i = i'$, then without loss of generality our linear combination is $\a_{ll} + \a_{lk}$, which
has a $2$ in the $(d+l)^{\text{th}}$ coordinate. If $ i \neq i'$, then
either we have $\a_{kl} + \a_{lk}$, which has a $0$ in the last entry, or
we have $\a_{ll} + \a_{kk}$ which has a $2$ in the last entry. Hence, we
have $\F \cap Q = \emptyset$.  To see that all elements
in $\F$ are indeed fundamental holes, one checks that each vector of the form $\h_{kl} - \a_{ij}$
is not in~$\Qsat$.
\end{proof}
We have now identified a set of fundamental holes. 
\cite[proof of Proposition 3.1]{takemura2008} have shown that the set of fundamental holes is contained in
\begin{equation*}
  P : = \left\{ \sum_{i,j \in [d]}
    \lambda_{ij} \a_{ij} \mid  0\leq \lambda_{ij} <1 \text{ for } i, j \in [d] \right\}.
\end{equation*}
To identify the fundamental holes, we can focus on $P$. Moreover, this proposition also implies that the (minimal) Hilbert basis is contained in the closure of $P$. 
The next theorem describes the (minimal) Hilbert basis for $\Qsat$.
\begin{theorem}
The minimal Hilbert basis for $\Qsat$ is given by
\[
\H : = \left\{ \a_{ij}\right\}_{i,j \in [d]} \cup \F.
\]
\end{theorem}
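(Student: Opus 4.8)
The plan is to establish the two inclusions $\H\subseteq\Qsat$ and that $\H$ generates $\Qsat$ as a semigroup, and then to argue minimality. The first inclusion is essentially already in hand: each $\a_{ij}$ lies in $Q\subseteq\Qsat$ by definition, and the previous lemma shows $\F\subseteq\Qsat$. The substantive content is to show that every element of $\Qsat$ is a non-negative integer combination of the vectors in $\H$, and that no proper subset of $\H$ suffices. Since the preceding proposition from~\cite{takemura2008} guarantees that the minimal Hilbert basis is contained in the closure of~$P$, I would first reduce the problem to a finite check: every Hilbert basis element $\v$ satisfies $\v=\sum_{i,j}\lambda_{ij}\a_{ij}$ with $0\le\lambda_{ij}\le 1$, so the candidates are highly constrained.

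The key step is to classify all lattice points of $\Qsat$ lying in $\overline P$ and show they are exactly the $\a_{ij}$ together with the $\h_{kl}$. First I would exploit the hyperplane constraint~\eqref{eq:sum}, which says the sum of the first $d$ coordinates equals the sum of coordinates $d+1,\dots,2d$; combined with the structure that $\a_{ij}$ contributes $1$ to coordinate $j$ (a ``column'' index among the first $d$) and $1$ to coordinate $d+i$ (a ``row'' index), any point of $K$ records a doubly-indexed non-negative real weighting, and the $(2d+1)$st coordinate counts the total diagonal weight. I would then analyze which of these weightings give integer points in $\overline P$ but fail to lie in $Q$, mirroring the case analysis already carried out in the lemma: the obstruction to membership in $Q$ is precisely the half-integer diagonal coordinate, which forces the $\h_{kl}$ shape. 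The goal is to show that once $\v\in\Qsat\cap\overline P$ has all integer coordinates, either $\v$ is a single generator $\a_{ij}$, or $\v$ is forced into the form $\tfrac12(\a_{ll}+\a_{lk}+\a_{kl}+\a_{kk})=\h_{kl}$ by the marginal and diagonal constraints.

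For the generation claim proper, I would argue that $\Qsat$ is generated by its Hilbert basis, which by the containment result lies in $\overline P$, so the classification above yields $\Qsat=\langle\H\rangle$. For minimality, I would use the standard fact that an element of a Hilbert basis of a pointed semigroup cannot be written as a sum of two nonzero semigroup elements: I would verify that no $\a_{ij}$ decomposes (each has coordinate sum too small), and that no $\h_{kl}$ decomposes within $\Qsat$, since any such decomposition would express $\h_{kl}$ as a sum involving another Hilbert basis element and would contradict $\h_{kl}$ being a \emph{fundamental} hole, a fact established in the lemma via $\h_{kl}-\a_{ij}\notin\Qsat$.

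The hard part will be the exhaustive classification of integer points of $\Qsat\cap\overline P$: one must rule out spurious candidates where several $\lambda_{ij}$ are fractional but conspire to produce integer marginals. Controlling this requires carefully tracking how fractional column-weights must pair up across the row-sum and diagonal-sum equations, and showing that the only consistent fractional pattern is the symmetric two-index pattern defining $\h_{kl}$. I expect that the diagonal constraint~\eqref{eq:sum} together with integrality of every coordinate pins down the $\lambda_{ij}$ to be either integral (giving generators $\a_{ij}$) or exactly $\tfrac12$ on the four positions $(k,k),(k,l),(l,k),(l,l)$, and that eliminating all other fractional combinations is the crux of the argument.
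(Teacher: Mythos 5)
Your architecture (Hilbert basis contained in the closure of $P$, then a classification inside $\overline{P}$, then minimality) is plausible in outline, but the central classification claim is false as stated, and the step you yourself flag as ``the hard part'' is exactly where all the content of the paper's proof lives. The lattice points of $\Qsat\cap\overline{P}$ are \emph{not} just the $\a_{ij}$ and the $\h_{kl}$: for example $\a_{11}+\a_{22}$ (take $\lambda_{11}=\lambda_{22}=1$ and all other $\lambda_{ij}=0$) and, for $d\ge 3$, the hole $\h_{12}+\a_{33}$ (take $\lambda_{11}=\lambda_{12}=\lambda_{21}=\lambda_{22}=\tfrac12$, $\lambda_{33}=1$) are integer points of $\Qsat$ lying in $\overline{P}$ that are neither single generators nor of the form $\h_{kl}$. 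What you actually need to classify are the \emph{irreducible} elements of $\Qsat$ in $\overline{P}$, and ruling a point out of the Hilbert basis means exhibiting a decomposition of it into two nonzero elements of $\Qsat$ --- which is precisely the generation argument you have not supplied. Your heuristic that ``the obstruction to membership in $Q$ is the half-integer diagonal coordinate'' at best explains why the fundamental holes have the shape $\h_{kl}$ (which the preceding lemma already established); it says nothing about why $\H$ \emph{generates} $\Qsat$.

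The paper avoids any classification of $\overline{P}$ and instead runs a descent: from \eqref{eq:sum} and integrality it derives the inequality $z_i+z_{d+i}\le S$ in \eqref{eq:Hall}, and then shows that every nonzero non-negative integer vector satisfying \eqref{eq:Hall} admits some $\a\in\H$ whose subtraction preserves non-negativity and \eqref{eq:Hall}, the case analysis being governed by which indices attain equality. The case $z_{2d+1}=0$, where no diagonal generator may be used and one must realize prescribed margins by off-diagonal $\a_{ij}$ only, is handled by a proper bipartite matching whose existence follows from Hall's marriage theorem, with \eqref{eq:Hall} playing the role of the marriage condition. That case is entirely invisible in your sketch and is not a routine verification; your proposed fractional-pattern analysis does not touch it. Finally, your minimality argument for $\h_{kl}$ has a small gap: a decomposition $\h_{kl}=\u+\v$ with \emph{both} summands holes does not directly contradict fundamentality, which only forbids $\h_{kl}-\h'\in Q$ for a hole $\h'$. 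You need an extra step --- e.g.\ since the coordinate sum $S$ equals $2$ for $\h_{kl}$ and the last coordinate equals $1$, one summand must be some $\a_{ii}$ --- before invoking $\h_{kl}-\a_{ij}\notin\Qsat$.
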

\begin{proof}
Let $\z=\left(z_1, z_2, \dots, z_{2d+1} \right) \in P \cap \Z^{2d+1}\setminus\{0\}$. %
Let $S := z_1 + z_2 + \dots + z_d = z_{d+1} + z_{d+2} + \dots +z_{2d}$. For every $i,j \in [d]$, we have $z_i = \sum_{j'=1}^d \lambda_{ij'}$ and $z_{d+j}\ge\lambda_{ij}$. This implies
\[
S - z_{d+i} = \sum_{j\in[d]\setminus\{i\}} z_{d+j} \geq \left\lceil \sum_{j\in[d]\setminus\{i\}} \lambda_{ij}\right\rceil \stackrel{\text{$z$ lattice point }}{=}  z_i ,
\] 
and hence
\begin{equation}
  \label{eq:Hall}
  z_i + z_{d+i}	
  \leq S. 
\end{equation}
We show that every non-negative integer vector $\z$ that satisfies~\eqref{eq:Hall} is a non-negative integer combination of~$\H$.  To do this, we show that if $\z\neq 0$, then there is an element $\a\in\H$ such that $\z-\a$ is non-negative and satisfies~\eqref{eq:Hall}.
Observe that subtracting $\a$ from $\z$ decreases the right hand side~$S$, so we need to make sure that the left hand side also decreases for those $i\in[d]$ for which~\eqref{eq:Hall} holds with equality.

First, suppose that $z_{2d+1} > 0$.

If there are two indices $l$ and $k$ where equality holds in~\eqref{eq:Hall}, then
\begin{equation*}
  z_{k} + z_{d+k} = S \ge z_{k} + z_{l}
  \quad\text{ and }\quad
  z_{l} + z_{d+l} = S \ge z_{d+k} + z_{d+l}.
\end{equation*}
It follows that $z_{d+k} = z_{l}$ and $z_{d+l} = z_{k}$.
Thus, $z_l$ and $z_k$ are the only nonzero entries among the first $d$ coordinates. 
It is easy to check that in this case, $\z - \frac12(\a_{ll} + \a_{lk} + \a_{kl} + \a_{kk})$ is non-negative and satisfies~\eqref{eq:Hall}.

If there is only one index $i$ for which equality holds in~\eqref{eq:Hall}, we can
check that $\z - \a_{ii}$ again is non-negative and satisfies~\eqref{eq:Hall}.
If there is no $i$ for which equality holds, we pick $i$ such that the pair of indices $(i, d+i)$ with $ z_i$, $ z_{d+i}\neq 0$ contains the biggest entry and subtract $\a_{ii}$.  Then $\z - \a_{ii}$ is non-negative and satisfies~\eqref{eq:Hall}.

It remains to discuss the case~$z_{2d+1} = 0$.  To express $\z$ as a non-negative linear integral combination of $\a_{ij}'$s where $i \neq j$, we translate the problem to a matching problem.  We have two labeled multi-sets $\bigcup_{i: z_{i}>0}\bigcup_{r=1}^{z_{i}}\{ i \}$ and $\bigcup_{j: z_{d+j}>0}\bigcup_{s=1}^{z_{d+j}}\{ j \}$, both of cardinality $S$.  Writing $\z$ as a non-negative integer combination of the $\a_{ij}$ corresponds to a matching between the two sets.  The matching has to be \emph{proper} in the sense that we only match elements $i,j$ with $i\neq j$.
For example, if $\z=(2,0,2,2,0,2,0)$, the two multi-sets are both equal to $\{1, 1, 3, 3\}$.
In this example, there is (up to symmetry) only one proper matching that matches 1 to 3 and 3 to~1, corresponding to the
identity $\z = \a_{1,3} +\a_{3,1} + \a_{1,3} + \a_{3,1}$.

It remains to show that there always exist such a proper matching.  This can either be seen directly by induction or by appealing to Hall's marriage theorem, noting that~\eqref{eq:Hall} always ensures that the marriage condition is satisfied.

This finishes the proof $\H$ is a Hilbert basis.  It is straightforward to check that $\H$ is indeed minimal.
\end{proof}
Knowing the Hilbert basis for $\Qsat$, we can completely describe the set of fundamental holes.
\begin{corollary}
$\F$ is the set of fundamental holes of~$Q$. 
\end{corollary}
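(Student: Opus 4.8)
The plan is to combine the preceding Lemma with the Theorem to pin down the set of fundamental holes exactly. The Lemma already establishes that $\F \subseteq \Qsat \setminus Q$ and, at the end of its proof, that each $\h_{kl}$ is in fact a fundamental hole (by checking that $\h_{kl} - \a_{ij} \notin \Qsat$). So the content that remains is the \emph{reverse} inclusion: every fundamental hole lies in $\F$. Equivalently, I need to show that there are no fundamental holes of $Q$ other than the $\h_{kl}$.

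First I would recall the set-up from the Theorem: the minimal Hilbert basis of $\Qsat$ is $\H = \{\a_{ij}\}_{i,j\in[d]} \cup \F$. The key observation is that a fundamental hole must itself be an element of the Hilbert basis of $\Qsat$. Indeed, if $\mathbf h \in H = \Qsat\setminus Q$ were \emph{not} a Hilbert basis element, then $\mathbf h$ would decompose as $\mathbf h = \mathbf b_1 + \mathbf b_2$ with $\mathbf b_1, \mathbf b_2 \in \Qsat\setminus\{0\}$; since $\mathbf h$ is a hole, at least one summand, say $\mathbf b_1$, must also be a hole (as a sum of two elements of $Q$ lies in $Q$), and then $\mathbf h - \mathbf b_1 = \mathbf b_2 \in \Qsat$. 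To conclude $\mathbf b_2 \in Q$ I would use the Theorem once more, noting that $\mathbf b_2$ is a strictly smaller element that is a sum of Hilbert basis elements; iterating the decomposition, any hole is reducible by subtracting some other hole $\mathbf h' $ with $\mathbf h - \mathbf h' \in Q$, contradicting fundamentality unless $\mathbf h$ is itself a minimal (Hilbert basis) generator. Hence every fundamental hole belongs to $\H$.

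Next I would exclude the non-hole elements of $\H$. By the Lemma, each generator $\a_{ij}$ lies in $Q$ (it is literally a column of $A$), so no $\a_{ij}$ is a hole, let alone a fundamental one. This leaves exactly the elements of $\F$, and by the Lemma these are all fundamental holes. Combining the two directions gives that the set of fundamental holes is precisely $\F$.

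The main obstacle I anticipate is making the first step fully rigorous: the claim that every fundamental hole is a Hilbert basis element requires a careful argument that the reduction process of subtracting Hilbert basis elements from a hole eventually produces a \emph{hole summand} that witnesses non-fundamentality. The clean way to handle this is to argue by minimality with respect to $\leq$ (or with respect to the total degree $S$ from the Theorem's proof): take any hole $\mathbf h$ and, using the Theorem's algorithm, subtract a suitable $\a \in \H$ so that $\mathbf h - \a$ is still a nonnegative lattice point of $\Qsat$. If $\mathbf h - \a \in Q$ and $\a \in Q$, then $\mathbf h \in Q$, a contradiction; so either $\mathbf h - \a$ or $\a$ is a hole. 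Since the $\a_{ij}$ are never holes, the only way $\mathbf h$ itself can fail to be reducible to a strictly smaller hole is if $\mathbf h \in \F$. Getting this induction stated precisely — and verifying the base case where $\mathbf h - \a = 0$ forces $\mathbf h = \a \in \F$ — is the crux; the rest is bookkeeping already supplied by the Lemma and Theorem.
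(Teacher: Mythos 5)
Your overall strategy---write a fundamental hole as a non-negative integer combination of the Hilbert basis $\H$ and rule out everything except a single element of $\F$---is the same as the paper's, and your handling of combinations involving a column $\a_{ij}$ is fine: if $\mathbf{h}=\a_{ij}+\mathbf{r}$ with $\mathbf{r}\in\Qsat\setminus\{0\}$, then $\mathbf{r}$ must be a hole (else $\mathbf{h}\in Q$), and $\mathbf{h}-\mathbf{r}=\a_{ij}\in Q$ contradicts fundamentality. The genuine gap is the remaining case, where the combination consists of two or more elements of $\F$ (counted with multiplicity). There both pieces of the splitting $\mathbf{h}=\h_{ij}+(\h_{kl}+\mathbf{r})$ can be holes, and then neither difference lands in $Q$, so fundamentality is not contradicted. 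Your step ``to conclude $\mathbf{b}_2\in Q$ \dots\ noting that $\mathbf{b}_2$ is a sum of Hilbert basis elements'' fails precisely because the Hilbert basis itself contains holes, and ``iterating the decomposition'' never escapes this case. Indeed, the claim you lean on---that every fundamental hole is a Hilbert basis element of $\Qsat$---is false for general semigroups: for the numerical semigroup $Q$ generated by $3$ and $5$, the element $2=1+1$ is a fundamental hole, yet the Hilbert basis of $\Qsat=\Z_{\geq 0}$ is $\{1\}$.

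What closes the gap, and is the actual content of the paper's proof, is the identity $\h_{ij}+\h_{kl}=\a_{kk}+\a_{ll}+\a_{ij}+\a_{ji}$, which shows that the sum of any two elements of $\F$ lies in $Q$. With it, a combination containing at least two elements of $\F$ can be written as $\mathbf{h}=(\h_{ij}+\h_{kl})+\mathbf{r}$ with $\h_{ij}+\h_{kl}\in Q$; then either $\mathbf{r}\in Q$, whence $\mathbf{h}\in Q$ (contradicting that $\mathbf{h}$ is a hole), or $\mathbf{r}$ is a hole with $\mathbf{h}-\mathbf{r}\in Q$ (contradicting fundamentality). You need to state and verify this identity (a short coordinate check using the description of the $\a_{ij}$ from the Lemma); without it, your reduction does not go through.
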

\begin{proof}
  We have already seen that every element in $\F$ is a fundamental hole. 
  It only remains to show that there are no other fundamental holes.  Any fundamental hole is a non-negative integer
  combination of the Hilbert basis~$\H$.  Clearly, this combination cannot involve the columns~$\a_{ij}$ (otherwise the
  combination would not be fundamental).  Thus, it suffices to show that the sum of two holes in $\H$ is not a hole.
  This follows from the identity
    $\h_{ij} + \h_{kl} = \a_{kk} + \a_{ll} +\a_{ij}+\a_{ji}.$ 
\end{proof}

We have now seen that there are exactly $\binom{d}{2}$ \emph{fundamental} holes. However, this semigroup has \emph{infinitely} many holes:

\begin{theorem}
  \label{thm:CDEM-hole-monoids}
  The set of holes in the set $\h_{kl} + Q$ is the union of the two monoids
  \begin{subequations}
    \label{eq:CDEM-holes}
    \begin{equation}
      \label{eq:CDEM-holes-nd}
      \h_{kl} + \Z_{\geq 0} \a_{kk} + \Z_{\geq 0} \a_{kl} + \Z_{\geq 0} \a_{lk} + \Z_{\geq 0} \a_{ll}
    \end{equation}
    and
    \begin{equation}
      \label{eq:CDEM-holes-d}
      \h_{kl} + \sum\nolimits_{i=1}^{d}\Z_{\geq 0} \a_{ii}.
    \end{equation}
  \end{subequations}
\end{theorem}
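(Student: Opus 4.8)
The plan is to recast membership in $Q$ as a transportation (contingency table) problem. Since the column $\a_{ij}$ puts a $1$ in the column-sum coordinate $j$, a $1$ in the row-sum coordinate $d+i$, and (only if $i=j$) a $1$ in the diagonal coordinate $2d+1$, a vector $\z\in\Qsat$ belongs to $Q$ if and only if there is a non-negative integer $d\times d$ matrix $M=(m_{ij})$ with column sums $c_j:=z_j$, row sums $r_i:=z_{d+i}$, and trace $\sum_i m_{ii}=z_{2d+1}=:t$. Writing $S=\sum_j c_j=\sum_i r_i$, a point of $\h_{kl}+Q\subseteq\Qsat$ is a hole exactly when no such integer matrix with trace $t$ exists, although a non-negative real one does.

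The engine of the proof is a criterion for which traces are attainable for fixed margins. Splitting $M$ into its diagonal $(m_{ii})$ and its off-diagonal part and applying the Hall/Gale feasibility condition to the off-diagonal routing (which must avoid the diagonal, so that the only binding constraints are $r_i+c_i-2m_{ii}\le S-t$), one obtains: a trace $t$ is realized by a non-negative integer matrix iff integers $m_{ii}$ summing to $t$ can be chosen with $L_i(t)\le m_{ii}\le\min(r_i,c_i)$, where $L_i(t)=\max\{0,\lceil(r_i+c_i-S+t)/2\rceil\}$. Summing the bounds, $t$ is attainable iff $\sum_i L_i(t)\le t\le\sum_i\min(r_i,c_i)=:\mathrm{tr}_{\max}$, the real relaxation being the same with the ceiling removed. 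Hence for $\z\in\Qsat$ the sole obstruction is $\sum_i L_i(t)>t$, turning the theorem into a parity count.

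For the two displayed families I would verify this obstruction directly. In \eqref{eq:CDEM-holes-nd} the margins are supported on $\{k,l\}$, so $M$ is $2\times2$; its attainable traces form an arithmetic progression of common difference $2$, and a short computation shows the trace of $\h_{kl}+(\text{combination})$ always has the opposite parity — no integer table, but a half-integral one exists, so it is a hole. In \eqref{eq:CDEM-holes-d} the margins are symmetric ($r=c$) with $t=S-1=\mathrm{tr}_{\max}-1$; the required off-diagonal mass $S-t=1$ cannot be realized, since a single off-diagonal unit cannot balance both a row and a column, so again $\z$ is a hole.

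Completeness is the heart of the matter. Writing $\z=\h_{kl}+A\boldsymbol\lambda$ and letting $N=\boldsymbol\lambda$ have $i$-th row-plus-column sum $E_i$ and total off-diagonal mass $W$, one computes $\delta_i:=r_i+c_i-S+t$ to be $E_i+1-W$ for $i\in\{k,l\}$ and $E_i-1-W$ otherwise. With $P=\{i:\delta_i\ge1\}$, $q=\#\{i\in P:\delta_i\text{ odd}\}$, and slack $s=2t-\sum_{i\in P}\delta_i\ge0$, one checks $s\equiv q\pmod 2$, so the obstruction $\sum_{i\in P}\lceil\delta_i/2\rceil>t$ reads $0\le s\le q-2$ and forces $|P|\ge q\ge2$. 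Using $\sum_iE_i=2(T+W)$ one gets the closed form $s=2-a+b+\sum_{i\notin P}E_i+(|P|-2)W$, where $a=|P\cap\{k,l\}|$ and $b=|P\setminus\{k,l\}|$. As $|P|-2\ge0$, the last two summands are non-negative, while $s\le q-2\le|P|-2$ forces $\sum_{i\notin P}E_i+(|P|-2)W\le 2a-4\le0$; non-negativity then pins down $a=2$ together with $\sum_{i\notin P}E_i=0$ and $bW=0$. The branch $b=0$ gives $E_i=0$ for all $i\notin\{k,l\}$, i.e. $\z$ is supported on $\{k,l\}$ and lies in \eqref{eq:CDEM-holes-nd}; the branch $W=0$ forces $N$ diagonal, i.e. $\z\in\h_{kl}+\sum_i\Z_{\ge0}\a_{ii}$, which is \eqref{eq:CDEM-holes-d}. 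The main obstacle is this completeness bookkeeping, and upstream the precise statement and proof of the Hall-type attainable-trace criterion; everything else is routine.
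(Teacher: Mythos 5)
Your argument is correct, but it takes a genuinely different route from the paper's. The paper works with a few explicit identities among the generators: it shows that $\h_{kl}+\a_{ij}$ is a non-hole whenever $i\neq j$ and $\{i,j\}\not\subseteq\{k,l\}$ by writing it explicitly as an integer combination of columns, rules out mixing a diagonal $\a_{ii}$ (with $i\notin\{k,l\}$) with $\a_{kl}$ via the identity $\h_{kl}+\a_{ii}+\a_{kl}=\a_{kk}+\a_{ll}+\a_{il}+\a_{ki}$, and then proves that the two displayed families really are holes by (i) observing that the four columns supported on $\{k,l\}$ are linearly independent, so the unique real representation is the half-integral one, and (ii) the trace count $\h_{2d+1}=S-1$, which would force exactly one off-diagonal summand. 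You instead set up a general feasibility criterion --- which traces are attainable by a non-negative integer table with prescribed margins --- and reduce holeness to the single ceiling obstruction $\sum_{i\in P}\lceil\delta_i/2\rceil>t$, closing with parity bookkeeping. I checked your closed form for $s$ and the congruence $s\equiv q\pmod 2$; both are correct, and the case split $a=2$, $\sum_{i\notin P}E_i=0$, $bW=0$ does recover exactly the two monoids (your $2\times2$ parity argument and the $S-t=1$ argument for the two families are also sound). What your route costs is the upstream lemma you flag yourself: you must prove that a zero-diagonal non-negative integer table with margins $r_i-m_{ii}$, $c_i-m_{ii}$ exists iff $r_i+c_i-2m_{ii}\le S-t$ for all $i$ (this follows from Hoffman-type feasibility for transportation problems with forbidden cells, the diagonal singletons being the only all-forbidden rectangles, plus total unimodularity for integrality), and you should note that the individual consistency $L_i(t)\le\min(r_i,c_i)$ is automatic once $t\le\sum_i\min(r_i,c_i)$. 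What it buys is a complete, representation-independent characterization of which elements of $\Qsat$ are holes, which turns the completeness half of the theorem into a mechanical computation rather than a search for the right algebraic identities; the paper's proof is shorter and self-contained but yields less reusable information.
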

\begin{proof}
  Fix $k,l\in[d]$, $k<l$.  If $i\neq j$ and $i\notin\{k,l\}$, then
  \begin{equation*}
    \h_{kl} + \a_{ij} = \a_{kk} + \a_{il} + \a_{lj}
  \end{equation*}
  assuming that $j\neq l$. If $j=l$, then we get 
\begin{equation*}
    \h_{kl} + \a_{ij} = \a_{ll} + \a_{kl} + \a_{ik}.
  \end{equation*} 
  Thus, if $i\neq j$ and $i\notin\{k,l\}$, then $\h_{kl} + \a_{ij} $ is not a hole. Similarly, if $j\notin\{k,l\}$, then $\h_{kl}+\a_{ij}$ is not a hole.  Thus, if $\h_{kl} + \sum_{r=1}^{s}\a_{i_{r}j_{r}}$ is a hole, then either $i_{r}=j_{r}$ or $\{i_{r},j_{r}\}=\{k,l\}$ for each~$r$.  We claim that either $i_{r}=j_{r}$ for all~$r$, or $\{i_{r},j_{r}\}=\{k,l\}$ for all~$r$.  This implies that each hole is as in the statement of the theorem. The claim follows from the computation
  \begin{equation*}
    \h_{kl} + \a_{ii} + \a_{kl} = \a_{kk} + \a_{ll} + \a_{il} + \a_{ki},
  \end{equation*}
  which is valid whenever $i\notin\{k,l\}$ and $k\neq l$.

  It remains to see that every integer vector in~\eqref{eq:CDEM-holes} is indeed a hole.  Let $\h$ be of the
  form~\eqref{eq:CDEM-holes-nd}, and suppose that $\h = \sum_{ij}\lambda_{ij}\a_{ij}$ with $\lambda_{ij}\ge0$.  Then
  $\lambda_{ij}\neq 0$ only for $\{i,j\}\subseteq\{k,l\}$, because $\h_{i}=0$ or $\h_{j}=0$ for $i,j\notin\{k,l\}$.
  The matrix $A$ restricted to the columns $\a_{ij}$ with $\{i,j\}\subseteq\{k,l\}$ equals
  \begin{equation*}
    \begin{pmatrix}
      1 & 0 & 1 & 0 \\
      0 & 1 & 0 & 1 \\
      1 & 1 & 0 & 0 \\
      0 & 0 & 1 & 1 \\
      1 & 0 & 0 & 1
    \end{pmatrix}
  \end{equation*}
  up to rows with only zeros.  Since this matrix has rank four, the representation of $\h$ as a linear combination is
  unique.  However, by assumption, $\h$ has a representation of the form $\h = \h_{kl} + \dots$ in which the
  coefficients are not integers (but half integers).  Thus, $\h$ is a hole.

  Finally, let $\h$ be of the form~\eqref{eq:CDEM-holes-d}.  Suppose that $\h = \sum_{ij}\lambda_{ij}\a_{ij}$ with
  $\lambda_{ij}\in\Z_{\geq 0}$, and let $S = \sum_{i=1}^{d}\h_{i} = \sum_{ij}\lambda_{ij}$.  Note that $\h_{2d+1} = S - 1$.
  Therefore, $\h$ is the sum of $S-1$ ``diagonal'' columns $\a_{ii}$ and one ``off-diagonal'' column $\a_{ij}$.  This is
  not possible, since, by assumption, the column sums and the row sums are the same.
\end{proof}

\begin{remark}
  Note the following two properties of the hole monoids:
  \begin{enumerate}
  \item The two monoids corresponding to a single hole $\h_{kl}$ are not disjoint.
  \item The hole monoids corresponding to two different fundamental holes $\h_{kl}$, $\h_{k'l'}$ are not disjoint: For example,
    \begin{equation*}
      \h_{12} + \a_{33} = \h_{23} + \a_{11}.
    \end{equation*}
  \end{enumerate}
\end{remark}

\section{Computational experiments}\label{experiment}
Semigroups play an important role in combinatorics, in discrete geometry, and in combinatorial commutative algebra.  The interplay between these areas is nicely exemplified by the theory of lattice polytopes. One can associate a semigroup to every lattice polytope. The Hilbert function of the corresponding graded semigroup ring turns out to be the Ehrhart function of the lattice polytope, see \cite[Section~12.1]{sturmfels2004} for more details about this connection and see \cite{beckrobins} for a nice introduction to Ehrhart theory. 

It is of particular interest to determine whether this semigroup has holes. For example, if the semigroup has holes, then there is no unimodular triangulation. Therefore, the algebraic structure is closely related to geometric properties. In Section~\ref{sec:IDP}, we briefly define what a (lattice) polytope is and how to construct the corresponding semigroup, and we present a computational result regarding the linear ordering polytope.

Holes of semigroups also play a role when computing Markov bases, as was recently shown by
\cite{RauhSullivant15:Lifting_Markov_bases}.  We give a brief example in Section~\ref{sec:lifting}.

\subsection{The Integer-Decomposition Property and Linear Order Polytopes}
\label{sec:IDP}
A \emph{polytope} $P\subset \R^d$ is the convex hull of finitely many vectors $\v_1$, $\v_2$, $\dots$, $\v_n \in \R^d$, and we write
\[
P = \operatorname{conv}\left(\v_1, \v_2, \dots, \v_n\right).
\]

The inclusion-minimal subset $V\subset \{\v_1, \v_2, \dots, \v_n \}$ such that $ P = \operatorname{conv}(V)$ is called the \emph{vertex set} of $P$ and an element of $V$ is called a \emph{vertex}. We say $P$ is a \emph{lattice polytope} if $V\subset \Z^d$, i.e., all coordinates of the vertices are integral.  The \emph{dimension} of a polytope $P$ is the dimension of its affine span, i.e., the dimension of the smallest affine subspace containing~$P$.  A $d$-dimensional polytope is sometimes called a \emph{$d$-polytope}.

A lattice $d$-polytope $P\in \R^d$ has the \emph{integer-decomposition property} (IDP) if for every integer $k>0$ and every integer point $\z \in kP\cap \Z^d$, there exist $\x_1, \x_2, \dots, \x_k \in P\cap \Z^d$ with
\begin{equation}
\label{eq:IDP}
\z = \x_1 + \x_2 + \dots + \x_k.
\end{equation}
Such a polytope is also called \emph{integrally closed} by other authors.

We can also express the integer-decomposition property in the language of semigroups. Let $(P,1):=\{(\x,1) \mid \x\in P\}\subset \R^{d+1}$ be the polytope embedded in $\R^{d+1}$ at height $1$. Moreover, let $K_{P} := \R_{\geq 0} (P,1)$ denote the (pointed) cone generated by the points in $(P,1)$ and let 
\[
Q_{P}:=\left\{\z\mid \z = k_1 \x_1 + \dots + k_n \x_n \text{, where } k_i \in \Z_{\geq 0}, \; \x_i \in (P,1)\cap\Z^{d+1} \right\}
\]
be the semigroup generated by the integer points in $(P,1)$. Then $P$ has the IDP if and only if the semigroup $Q_{P}$ is saturated. This means that we can check if a polytope has the integer-decomposition property by showing that the semigroup does not have any holes.

In a computational experiment, we examined whether the $7^\text{th}$ linear ordering polytope has the integer decomposition property. Sturmfels and Welker already showed that for $n\leq 6$, the $n^\text{th}$ linear ordering polytope satisfies the IDP, see \cite[Theorem 6.1]{SturmfelsWelker}.

For any permutation $\pi$ of $n$ elements, 
 we define
\[
v_{ij}(\pi) = \begin{cases}
1 		&\text{if }\pi(i) > \pi(j)\\
0		&\text{otherwise, }
 \end{cases}  
\]
where $1\leq i<j \leq n$. We follow the definition of \cite[Section 3.3]{Katthaen} 	and define the $n^\text{th}$ linear ordering polytope $P_n$ as the convex hull of the $n!$ vectors $\v(\pi) : = \left(v_{ij}(\pi)\right)_{1 \leq i<l\leq n} \in \R^{\binom{n}{2}}$. Note that the vertices are the only integer points of $P_n$. A python program that generates the matrix can be downloaded from
the HASE homepage.
After a bit more than a month of computation time on a linux machine
with 16 Intel processors (Intel(R) Xeon(R) CPU E5-2687W v2 at 3.40GHz), the program confirmed that $P_7$ also has the IDP.
\begin{theorem}
\label{LinOrderPoly}
The $n^{\text{th}}$ linear ordering polytope has the integer-decomposition property for $n\le7$.
\end{theorem}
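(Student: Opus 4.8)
The plan is to translate the statement into a finite computation using the semigroup characterization of the IDP established above. Recall that $P_n$ has the integer-decomposition property if and only if the semigroup $Q_{P_n}$ generated by the lattice points of $(P_n,1)$ is saturated, that is, if and only if the set of holes $H = \Qsat \setminus Q_{P_n}$ is empty. Since Sturmfels and Welker already proved the IDP for $n \le 6$, it suffices to treat the single case $n = 7$, and for this it is enough to verify that $Q_{P_7}$ has no holes.

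First I would build the generating matrix $A$ explicitly. For $n = 7$ the polytope lives in $\R^{\binom{7}{2}} = \R^{21}$, and because the vertices are the only integer points of $P_7$, the semigroup $Q_{P_7}\subset\Z^{22}$ is generated by the $7! = 5040$ columns $(\v(\pi),1)$ as $\pi$ ranges over all permutations of $[7]$. A script producing this $22 \times 5040$ matrix is available on the HASE homepage. I would then hand $A$ to HASE and run the first step of the algorithm of Section~\ref{ALG}: invoke \textbf{Normaliz} to compute the minimal set $F'$ of generators of $\Qsat(A)$ as a $Q(A)$-module. As noted there, $F := F' \setminus \{\textbf{0}\}$ is exactly the set of fundamental holes, so $Q_{P_7}$ is saturated precisely when $F' = \{\textbf{0}\}$, i.e.\ when $F = \emptyset$. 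For the IDP question we need only this step: once \textbf{Normaliz} certifies that there are no fundamental holes, there are no holes at all, and the theorem follows. The downstream steps (\textbf{zsolve}/\textbf{4ti2} and \textbf{Macaulay2}) are then unnecessary, since all hole monoids are empty.

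The main obstacle here is not mathematical but computational. With $22$ rows and $5040$ columns, computing the saturation together with the module generators $F'$ is an enormous Hilbert-basis-type computation, and, as the list of failure modes in Section~\ref{sec:performance} makes explicit, the step most likely to exhaust time or memory is precisely \textbf{Normaliz}'s computation of the fundamental holes. Indeed, as recorded above, this instance required somewhat more than a month of wall-clock time on sixteen parallel Xeon cores. The only mathematical input beyond the Sturmfels--Welker result is the equivalence between the IDP and the absence of holes; hence, once the computation terminates with $F = \emptyset$, the proof is complete.
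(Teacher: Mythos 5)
Your proposal is correct and matches the paper's approach: the paper likewise reduces the $n=7$ case to checking that the semigroup generated by the $5040$ lifted vertices of $P_7$ has no holes, and reports that the HASE computation (a bit over a month on 16 Xeon cores) confirmed this, with the $n\le 6$ cases covered by Sturmfels--Welker. Your observation that only the \textbf{Normaliz} step (computing the fundamental holes) is needed to certify saturation is a correct and slightly more explicit account of what the computation must verify.
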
 
The question whether or not $P_n$ satisfies the IDP for all $n\in \Z_{\geq 1}$ is still open.

\subsection{Lifting Markov bases and Gr\"obner bases}
\label{sec:lifting}

Recently, Rauh and Sullivant \cite{RauhSullivant15:Lifting_Markov_bases} have proposed a new iterative algorithm to compute Markov bases and Gr\"obner bases of toric ideals in which a key step is to understand the holes of an associated semigroup. We do not explain this theory here, but we summarize two examples that arose in this context and that can now be reproduced using HASE.

The first example is from the computation of the Markov basis of the binary complete bipartite graph~$K_{3,N}$, as computed by~\cite{RauhSullivant14:Markov_basis_K3N}. The associated semigroup has two fundamental holes. Each fundamental hole has one associated monoid, generated by eight generators. The input file \texttt{K31codz.mat} for HASE can be downloaded from the HASE homepage.
Within a few seconds, HASE produces the following output:
{\footnotesize
\begin{lstlisting}
Normaliz found 2 fundamental holes.
Standard pairs of [1 1 1 1 1 1 1 1 1 1 1 1 0 1 1 0 1 0 0 1]:
   1: {x1, x2, x4, x7, x9, x10, x12, x15}
Standard pairs of [1 1 1 1 1 1 1 1 1 1 1 1 1 0 0 1 0 1 1 0]:
   1: {x0, x3, x5, x6, x8, x11, x13, x14}
\end{lstlisting}
}
The same method can be used to compute a Markov basis for the binary $3\times3$-grid. In this case, one needs to
understand the holes of a larger semigroup.  Again, the input file \texttt{3x3codz.mat} can be downloaded from the HASE homepage.

This problem turns out to be much more difficult
for HASE, and in fact, after waiting 24 hours for HASE to finish we became impatient and aborted the program, even when
the option \texttt{--trick} was activated.

Surprisingly, it turns out that the set of holes itself can be computed with some extra information: While the semigroup has 32 fundamental holes, there are only three symmetry classes.  The time that HASE spends on a
single hole varies greatly, even within a symmetry class. So all that is needed to finish the computation is to find
representatives of the three symmetry classes such that the hole monoid computations run through relatively quickly. The
current version of HASE cannot be used to run the algorithm on a subset of the fundamental holes (but it is not
difficult to do this manually by looking at HASE's source code). This shows once again how important it is to take
symmetry into account.

\section{Discussion and open problems}
There are many open problems concerning semigroups and holes of semigroups. In this section, we just want to briefly mention a non-respresentative selection of open problems. 

As mentioned in the beginning, the Frobenius problem is still open if there are more than two generators. There are several computational results, see e.g.~\cite{beckfrobenius}. It might be possible to use the structure of the holes, i.e. which hole is based on which fundamental hole, to say something about the Frobenius number. Alternatively, one could use a slightly modified version of HASE to compute the Frobenius number explicitly.


As briefly discussed in Section~\ref{sec:IDP}, holes in
semigroups coming from lattice polytopes are of particular interest as
they reflect geometric properties. Therefore, another application of
HASE is to describe the semigroup coming from a user specified lattice polytope.

\section{Acknowledgements}
The first author would like to thank Christian Haase for helpful discussions and for pointing his attention towards linear ordering polytopes. The first author was partially supported by a scholarship of the Berlin Mathematical School. We would also like to thank the \textbf{Normaliz} team for helpful discussions and suggestions.

\bibliographystyle{plain}
\bibliography{refs}






\end{document}